\title{The Kolakoski sequence and related questions about orbits}
\newcommand\blfootnote[1]{%
  \begingroup
  \renewcommand\thefootnote{}\footnote{#1}%
  \addtocounter{footnote}{-1}%
  \endgroup
}
\begin{document}
\maketitle
{\centering Bobby Shen\footnote{Bobby Shen,  Department of Mathematics, Massachusetts Institute of Technology,
Cambridge, Massachusetts, USA, \url{runbobby@mit.edu}} \par}

\newtheorem{theorem}{Theorem}[section]
\newtheorem{corollary}{Corollary}[theorem]
\newtheorem{lemma}[theorem]{Lemma}
\newtheorem{proposition}[theorem]{Proposition}
\newtheorem{conjecture}[theorem]{Conjecture}
 
\theoremstyle{definition}
\newtheorem{definition}[theorem]{Definition}
 
\theoremstyle{remark}
\newtheorem*{remark}{Remark}
\begin{abstract}
The Kolakoski sequence is the unique infinite sequence with values in $\{1, 2\}$ and first two term $1, 2, \ldots$ which equals the sequence of run-lengths of itself; we call this $K(1, 2).$ We define $K(m, n)$ similarly for $m+n$ odd. The focus of this paper is not on well-known conjectures about limiting densities but rather on conjectures which are more discrete in nature. 

We define two functions, $E_{m, n}$ and $C_{m, n},$ which are naturally encountered when studying iterated run-length expansion\footnote{These were probably introduced by V. Chvatal in \cite{Chvatal}, but we currently cannot find the paper.} We conjecture that a certain doubly infinite family of finite sequences $E_{1, n}\left(1^{2^j}, 1^{2j}\right)$ has odd length for all $j>0$ and even $n>0.$ We prove that this statement is equivalent to orbits of certain functions $C_{1, n}(1, -)$ being as large as possible. We empirically verify this for all even $n$ and $j \le 13.$ 
\end{abstract}

\blfootnote{2010 \textit{Mathematics Subject Classsifications}: 05A99\\ Keywords: Kolakoski sequence, Recurrence} 
\section{Introduction}
The Kolakoski sequence is the unique infinite sequence with values in $\{1, 2\}$ and first two terms $1, 2, \ldots$ which equals the sequence of run-lengths in the run-length encoding of itself. See \cite{RLE} for one definition of run-length encoding. The existence and uniqueness is relatively easy to prove. The Kolakoski sequence begins $1, 2, 2, 1, 1, 2, 1, 2, 2, 1, 2, 2, 1, 1, 2, \ldots.$ There are many open problems associated with the Kolakoski sequence. Perhaps the most famous conjecture is that the limiting density of ``$1$'' in the Kolakoski sequence equals one-half. 

The Kolakoski sequence can be defined for other ``alphabets'' $\{m, n\},$ where $m$ and $n$ are distinct positive integers. In particular, we define $K(m, n)$ to be the unique infinite sequence with values in $\{m, n\}$ whose first term is $m$ and which equals the sequence of run-lengths of itself. When $m+n$ is even, the sequence $K(m,n)$ is much easier to understand because $K(m,n)$ is realized as a fixed point of substitution rules. Therefore, we focus on the case in which $m+n$ is odd. 

In 1993, V. Chvatal proved that the superior and inferior limits of the density of $1$ in the first $n$ terms is bounded by $1/2 \pm 0.0084.$ Unfortunately, we currently cannot find the paper online. However, we believe that he proved the intermediate result that in an infinite sequence with values in $\{1, 2\}$ and whose first $20$ run-length encodings all have values in $\{1, 2\}$, the superior and inferior limits of the density of $1$ in the first $n$ terms is bounded by $1/2 \pm 0.0084.$\footnote{We believe this because the author did a math research project with Yongyi Chen and Michael Yan in Spring 2015 at MIT. We independently used this intermediate idea and found that the density is bounded by $1/2 \pm 0.00833,$ which we think is unlikely to coincide with Chvatal's bound by pure chance. The density is also bounded by $1/2 \pm 0.00334$ if one replaces ``20'' by ``26.'' These results require a fair amount of computation power and memory. See an semi-formal summary of our methods at \cite{CSY}.}

We think Chvatal proved this result by introducing functions $E_{m, n},$ $C_{m, n}$ at least for $(m,n) = (1,2).$ These functions are naturally encountered when discussing the Kolakoski sequence and iterated run-length encoding. We reproduce their definitions here. The functions $E_{m, n}, C_{m, n}$ have recursive definitions, and the functions may be interesting in their own right. 

One remarkable conjecture regarding the function $C_{m, n}$ is the following. First, we note that for all sequences $s,$ the function $C_{m, n}(s, -)$ is a length-preserving bijection over the set $\{m, n\}^*.$ Thus, it makes sense to discuss the orbit of this function. We conjecture that for all $j>0$ and even $n>0$, the orbit of the sequence $(1,1,1 \cdots 1,1,1)$ with $2j-1$ ones under the function $C_{1, n}(1, -)$ has size $2^{j}.$ On the other hand, we prove that the orbit of a sequence $t$ with length $2j-1$ under the function $C_{m, n}(s, -)$ divides $2^j.$ In this sense, the sequence $(1, 1, 1, \cdots, 1, 1, 1)$ has the maximum possible orbit length under $C_{1, n}(1, -).$ We also prove that this conjecture is related equivalent to the following: For all $j>0$ and even $n>0,$ the sequence $E_{1, n}\left(1^{2^j}, 1^{2j} \right)$ has odd length. In this sense, this conjecture states that a certain doubly infinite family of sequences all has odd length. This conjecture is fundamentally different than the usual conjectures about limiting densities in the Kolakoski sequence. 

Using a lot of computational power and a lemma, we have empirically verified this conjecture for all $j \le 13$ and even $n>0$. (The lemma reduces this particular infinite set of cases to a finite set.)




\subsection{Outline}
Section 2 introduces two auxiliary functions $C_{m, n}$ and $E_{m, n}$ which are naturally encountered when discussing the Kolakoski sequence and iterated run-length encoding. 
In section 3, we prove basic facts about these orbits, formulate our three main conjectures, and prove that two of them are equivalent.
Sectuib 4 discusses our algorithms for verifying many small cases  of these conjectures.

\subsection{Notation}
All numbers in this paper are positive integers except when otherwise specified. If $s$ is a sequence, then its terms are $s_1, s_2, \ldots.$ The length of $s$ is $|s|.$ Sometimes, we will use $s^{(1)}, s^{(2)},$ etc. to denote different sequences. If $s$ and $t$ are sequences with $s$ finite, then $st$ is the concatenation of $s$ (first) and $t.$ If $s$ is a finite sequence, $x$ is a number, and $p$ is a positive integer, then $s^p$ is the concatenation of $n$ copies of $s,$ and $x^p$ is the sequence of length $p$, all of whose terms are $x.$ 
The complement of a sequence with values in $\{m, n\}$ means the unique sequence of the same length with values in $\{m, n\}$ which is termwise different. The notation $\{m, n\}^k$ means the set of sequences of length $k$ with values in $\{m, n\}.$ The notation $\{m, n\}^k$ means the set of finite sequences with values in $\{m, n\}.$

 Henceforth, a \textit{subsequence} of a sequence $s$ will always mean a subsequence of consecutive elements. 

\section{Iterated run-length encoding and the functions $C_{m, n}$ and $E_{m, n}$}
Let $m, n$ be distinct positive integers. Recall that $K(m, n)$ is the unique infinite sequence with values in $\{m, n\}$ and first term $m$ which equals the sequence of run lengths of itself. It is clear that the infinite sequence $K(m, n)$ has local structure. Therefore, it is natural to try to express $K(m, n)$ as an infinite concatenation of finite sequences in a meaningful way. In this section, we develop one way of achieving this, and in doing so, we will define the functions $C$ and $E.$

We define the function $R,$ (the ``run-length'' function) such that for any possibly infinite sequence $s,$ $R(s)$ is the sequence of run lengths of $s.$ Note that $R(K(m, n)) = K(m, n).$

Because we are focusing on "alphabets" of size $2,$ the function $R$ almost has an inverse. To be precise, given $m \neq n$ and a posisbly infinite sequence of positive integers $t,$ there are exactly two different possible values of $s$ with values in $\{m, n\}$ such that $R(s) = t.$ The function $R \circ R$ also almost has an inverse. To be precise, given a possibly infinite sequence of positive integers $t,$ there are exactly four different possible values of $s$ such that both $s$ and $R(s)$ have values in $\{m, n\}$ and $R(R(s)) = t.$ These four values of $s$ are realized by first choosing a ``starting point'' for $R^{-1}(t)$, either ``m'' or ``n,'' then choosing a starting point for $R^{-1}(R^{-1}(t)),$ again either ``m'' or ``n'' and unrelated. This motivates the following definition.

\begin{definition}
Let $m \neq n.$ We define the function $E_{m, n}$, (the ``expansion'' function) which maps pairs of sequences to sequences as follows. Let $s$ be a possibly infinite sequence of positive integers, usually but not necessarily with values in $\{m, n\}.$ Let $t$ be a finite, nonempty sequence of positive integers with values in $\{m, n\}.$

If $t$ has length $1,$ then we define $E_{m, n}(s,t)$ to be the unique sequence with values in $\{m, n\}$ and whose first term equals $u_1$ and such that $R(E_{m, n}(s,t)) = t.$

If $t$ has length greater than $1,$ then let $t'$ be $t$ without its first term. Then we define $E_{m, n}(s,t)$ to equal $E_{m, n}(E_{m, n}(s,t_1), t'),$ where we regard the number $t_1$ as a length 1 sequence. In other words, $E_{m, n}(s,t)$ is the unique sequence such that 
\begin{itemize}
\item $R^{|t|}(E_{m, n}(s,t)) = s.$
\item For $k = 0, 1, \ldots, |t|-1.$ $R^k(E_{m, n}(s,t))$ has values in $\{m, n\}$.
\item For  $k = 0, 1, \ldots, |t|-1,$ the first term of $R^k(E_{m, n}(s,t))$ equals $t_{|t|-k}$. (Note the reversal.)
\end{itemize}
\end{definition}

\begin{remark} Informally, we call $E_{m, n}(s, t)$ the ``expansion of the sequence $s$ with starting points in $t$.'' Note that $t_1$ is the first starting point used when expanding, $t_2$ is the second, etc. We insist on having the subscripts in $E_{m, n}$ because later, we will discuss $E_{1, n}(1^{2^j}, 1^{2j}).$ $n$ can be an arbitrary integer greater than $1,$ and this expression depends on $n.$
\end{remark}

\begin{remark} Recall that $R(K(m, n)) = K(m, n).$ For any $k > 0,$ we have $R^k(K(m, n)) = K(m, n),$ and the first term of $K(m, n)$ equals $m.$ Therefore, $K(m, n) = E_{m, n}(K(m, n), m^k).$
\end{remark}

It is natural to consider what happens when we expand a concatenation of two sequences with a single set of starting points: $E_{m, n}(s^{(1)}s^{(2)}, t).$  First suppose that $t$ has length $1.$ It's easy to see that the function $u:= E_{m, n}(s^{(1)}s^{(2)}, t)$ is the concatenation of two sequences, $u = u^{(1)}u^{(2)},$ such that $R(u^{(i)}) = s^{(i)}$ for $i = 1, 2.$ The first term of $u^{(1)}$ equals the first term of $u,$ which is $t_1.$ The first term of $u^{(2)}$ is different from the last term of $u^{(1)}$; specifically, one term is $m,$ and the other is $n$ in an unspecified order. Note that the last term of $u^{(1)}$ only depends on $s^{(1)}$ and $t$; spefically, the last term of $u^{(1)}$ equals $t_1$ if $|s^{(1)}|$ is odd, and this last term equals $m+n-t_1$ if $|s^{(1)}|$ is even. Combining these observations, we have
\begin{align*}
E_{m, n}(s^{(1)}s^{(2)}, t) = E_{m, n}(s^{(1)}, t) E_{m, n}(s^{(2)}, m+n-u^{(1)}_{|u^{(1)}|}) \end{align*}

Next consider $E_{m, n}(s^{(1)}s^{(2)}, t)$ where $t$ has length $2.$ $E_{m, n}(s^{(1)}s^{(2)}, t_1)$ (where $t_1$ is regarded as a sequence of length $1$) can be characterized as in the previous paragraph. Let $u, u^{(1)}, u^{(2)}$ be as in the previous paragraph. It is easy to see that $v:=E_{m, n}(s^{(1)}s^{(2)},t_2)$ is the concatenation of two sequences, $v = v^{(1)}v^{(2)},$ such that $R(v^{(i)}) = u^{(i)}.$ The first term of $v^{(1)}$ equals the first term of $v,$ which equals $t_2.$ The first term of $v^{(2)}$ is different from the last term of $v^{(1)}.$ Note that $v^{(1)} = E_{m, n}(u^{(1)}, t_2)$ and $u^{(1)} = E_{m, n}(s^{(1)}, t_1).$ Therefore, $v^{(1)} = E_{m, n}(s^{(1)}, t).$ Also, the last term of $v^{(1)} =  E_{m, n}(s^{(1)}, t)$ only depends on $t_2, t_1,$ and $s^{(1)}$. (To be very precise, the last term of $v^{(1)}$ doesn't depend on $t_1,$ but we don't need to be that precise.) 

The discussion in the previous two paragraphs motivates the following definition and proposition.

\begin{definition}
We define the ``torsion'' function $C_{m,n}$ which maps pairs of finite sequences to finite sequences as follows. Let $m \neq n,$ $t$ be a finite sequence with values in $\{m, n\},$ and $s$ be a finite sequence, usually with values in $\{m, n\}$ as well. For $k = 1, 2, \ldots, |t|,$ let $t^{(k)}$ be the sequence (of length $k$) which is the first $k$ terms of $t,$ in order. We define $C_{m, n}(s, t)$ to be a sequence with values in $\{m, n\}$ of the same length as $t$ such that for $k = 1, 2, \ldots, |t|,$ the $k^{\text{th}}$ term of $C_{m, n}(s, t)$ equals $m+n$ minus the last term of $E_{m, n}(s, t^{(k)}).$ In other words, $C_{m, n}(s, t)$ is the complement of the sequence of the last terms in the intermediate expansions of $E_{m, n}(s, t).$

Informally, we call $C_{m, n}(s, t)$ the ``torsion of the sequence $t$ by the sequence $s$.''
\end{definition}

\begin{proposition} \label{EC}
Let $m \neq n,$ $t^{(1)}$ be a finite sequence with values in $\{m, n\},$ $s^{(1)}$ be a finite sequence of positive integers, and $s^{(2)}$ be an arbitrary sequence of positive integers. Then
\begin{align*} E_{m, n}(s^{(1)}s^{(2)}, t) = E_{m, n}(s^{(1)}, t) E_{m, n}(s^{(2)}, C_{m, n}(s^{(1)}, t)). \end{align*}
In other words, the expansion of the concatenation $s^{(1)}s^{(2)}$ by $t$ equals the expansion of $s^{(1)}$ by $t$ concatenated with the expansion of $s^{(2)}$ by the torsion of $t$ by $s^{(1)}.$

\end{proposition}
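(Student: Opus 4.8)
The plan is to prove the identity by induction on $|t|$; the length-one case carries the real content and the inductive step is bookkeeping with the recursive definitions of $E_{m,n}$ and $C_{m,n}$. First suppose $|t| = 1$ and set $u := E_{m,n}(s^{(1)}s^{(2)}, t)$, the unique $\{m,n\}$-valued sequence with first term $t_1$ and $R(u) = s^{(1)}s^{(2)}$. Write $u = u^{(1)}u^{(2)}$ with $R(u^{(1)}) = s^{(1)}$ and $R(u^{(2)}) = s^{(2)}$; this split exists, is unique, and (as noted in the discussion preceding the definition of $C_{m,n}$) the last term of $u^{(1)}$ differs from the first term of $u^{(2)}$. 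Since $u^{(1)}$ is a nonempty prefix of $u$, it is $\{m,n\}$-valued with first term $t_1$ and $R(u^{(1)}) = s^{(1)}$, so $u^{(1)} = E_{m,n}(s^{(1)}, t)$; and $u^{(2)}$ is $\{m,n\}$-valued with $R(u^{(2)}) = s^{(2)}$ and first term $m+n$ minus the last term of $u^{(1)}$, which is exactly the unique term of $C_{m,n}(s^{(1)}, t)$, so $u^{(2)} = E_{m,n}(s^{(2)}, C_{m,n}(s^{(1)}, t))$. (If $s^{(1)}$ is empty the proposition is trivial, so throughout we may assume $E_{m,n}(s^{(1)}, \cdot)$ is nonempty whenever its last term is referenced.)

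For the inductive step I would first record a ``cocycle'' identity for $C_{m,n}$: if $|t| \ge 2$ and $t = t_1 t'$ with $t'$ the tail of $t$, then
\begin{align*}
C_{m,n}(s, t) = C_{m,n}(s, t_1)\; C_{m,n}\!\bigl(E_{m,n}(s, t_1),\, t'\bigr).
\end{align*}
To see this, compare terms: both sides have length $|t|$; their first terms agree, each being $m+n$ minus the last term of $E_{m,n}(s, t^{(1)})$; and for $1 \le k \le |t|-1$, the $(k+1)$-st term of the right-hand side is $m+n$ minus the last term of $E_{m,n}\!\bigl(E_{m,n}(s, t_1),\, (t')^{(k)}\bigr)$, which by the defining recursion of $E_{m,n}$ equals $E_{m,n}(s,\, t_1 (t')^{(k)}) = E_{m,n}(s, t^{(k+1)})$, matching the $(k+1)$-st term of $C_{m,n}(s, t)$. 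The same recursion also gives $E_{m,n}(E_{m,n}(s, a), b) = E_{m,n}(s, ab)$ whenever $a$ has length one and $b$ is nonempty.

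Now fix $|t| \ge 2$, write $t = t_1 t'$, and assume the proposition for all strictly shorter second arguments. The recursion for $E_{m,n}$ and the length-one case (applied to the starting-point set $t_1$) give
\begin{align*}
E_{m,n}(s^{(1)}s^{(2)}, t) = E_{m,n}\!\bigl(\, E_{m,n}(s^{(1)}, t_1)\; E_{m,n}(s^{(2)}, C_{m,n}(s^{(1)}, t_1)),\; t' \,\bigr),
\end{align*}
and applying the inductive hypothesis to the right-hand side, with the concatenation split after $E_{m,n}(s^{(1)}, t_1)$, rewrites it as
\begin{align*}
E_{m,n}\!\bigl(E_{m,n}(s^{(1)}, t_1),\, t'\bigr)\;\; E_{m,n}\!\bigl(\, E_{m,n}(s^{(2)}, C_{m,n}(s^{(1)}, t_1)),\; C_{m,n}(E_{m,n}(s^{(1)}, t_1),\, t') \,\bigr).
\end{align*}
The first factor is $E_{m,n}(s^{(1)}, t)$ by the recursion for $E_{m,n}$; and since $C_{m,n}(s^{(1)}, t_1)$ has length one, the second factor equals $E_{m,n}\!\bigl(s^{(2)},\, C_{m,n}(s^{(1)}, t_1)\, C_{m,n}(E_{m,n}(s^{(1)}, t_1),\, t')\bigr)$, which by the cocycle identity is $E_{m,n}(s^{(2)}, C_{m,n}(s^{(1)}, t))$. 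This gives $E_{m,n}(s^{(1)}s^{(2)}, t) = E_{m,n}(s^{(1)}, t)\, E_{m,n}(s^{(2)}, C_{m,n}(s^{(1)}, t))$, so the induction is complete.

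The argument is entirely mechanical once the base case is trusted, so the main obstacle is really the bookkeeping there and in the cocycle identity: one must be careful that $u = u^{(1)}u^{(2)}$ is split at the run boundary forced by $s^{(1)}$, that ``first term of $u^{(2)}$ equals $m+n$ minus the last term of $u^{(1)}$'' is precisely what the definition of $C_{m,n}$ encodes, and that the index shift $t_1 (t')^{(k)} = t^{(k+1)}$ is handled correctly when reconciling $C_{m,n}(E_{m,n}(s, t_1),\, t')$ with $C_{m,n}(s, t)$.
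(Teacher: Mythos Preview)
Your argument is correct and follows exactly the approach the paper indicates: the paper's proof consists of the single sentence ``The proof of this proposition is basically induction on the length of $t$,'' and your write-up supplies precisely that induction, together with a proof of the auxiliary identity $C_{m,n}(s,t_1t') = C_{m,n}(s,t_1)\,C_{m,n}(E_{m,n}(s,t_1),t')$, which the paper later records as identity~(\ref{z4}) without proof. Nothing is missing or done differently.
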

The proof of this proposition is basically induction on the length of $t.$

Here are some further useful propositions involving $C$ and $E,$ where we repeat the previous proposition for convenience.
\begin{proposition} Let $m \neq n,$ $s$ and $s^{(1)}$ be a finite sequence, $s^{(2)}$ be an arbitrary sequence,  and $t, t^{(1)}, t^{(2)}$ be finite sequences with values in $\{m, n\}.$
\begin{align}
	\label{z1} E_{m, n}(s^{(1)} s^{(2)}, t)&= E_{m, n}(s^{(1)}, t)E_{m, n}(s^{(2)}, C_{m, n}(s^{(1)}, t)). \\
	\label{z2} E_{m,n}(s, t^{(1)}t^{(2)}) &= E_{m, n}(E_{m, n}(s, t^{(1)}),t^{(2)}). \\
	\label{z3} C_{m, n}(s^{(1)} s^{(2)}, t) &= C_{m, n}(s^{(2)}, C_{m, n}(s^{(1)}, t)). \\
	\label{z4} C_{m, n}(s, t^{(1)} t^{(2)}) &= C_{m,n}(s, t^{(1)})C_{m, n}(E_{m, n}(s, t^{(1)}),t^{(2)}).
\end{align}
\end{proposition}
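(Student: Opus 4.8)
The plan is to establish the four identities \eqref{z1}--\eqref{z4} by bootstrapping from Proposition~\ref{EC}, which already gives \eqref{z1} (it is literally the same statement), and from the recursive definition of $E_{m,n}$, which gives \eqref{z2} (for $|t^{(2)}|=1$ this is the recursive step $E_{m,n}(s,t)=E_{m,n}(E_{m,n}(s,t_1),t')$ read from the other end, and the general case follows by a trivial induction on $|t^{(2)}|$, peeling off one starting point at a time). So the real content is deriving \eqref{z3} and \eqref{z4} from \eqref{z1} and \eqref{z2} together with the definition of $C_{m,n}$ as ``the complement of the sequence of last terms of the intermediate expansions.''

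First I would prove \eqref{z4}, the decomposition of $C_{m,n}$ in its \emph{second} argument, since it follows most directly from the definitions. Fix $k$ with $1\le k\le |t^{(1)}t^{(2)}|$ and consider the $k$th term of each side. If $k\le |t^{(1)}|$, then the first $k$ terms of $t^{(1)}t^{(2)}$ are just the first $k$ terms of $t^{(1)}$, so the $k$th term of the left side is by definition $m+n$ minus the last term of $E_{m,n}(s,(t^{(1)})^{(k)})$, which is exactly the $k$th term of $C_{m,n}(s,t^{(1)})$. If $k=|t^{(1)}|+\ell$ with $1\le\ell\le|t^{(2)}|$, then $(t^{(1)}t^{(2)})^{(k)}=t^{(1)}(t^{(2)})^{(\ell)}$, and by \eqref{z2} we have $E_{m,n}(s,t^{(1)}(t^{(2)})^{(\ell)})=E_{m,n}(E_{m,n}(s,t^{(1)}),(t^{(2)})^{(\ell)})$; taking $m+n$ minus its last term gives precisely the $\ell$th term of $C_{m,n}(E_{m,n}(s,t^{(1)}),t^{(2)})$. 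Matching the two cases against the definition of concatenation yields \eqref{z4}.

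Next I would prove \eqref{z3}, the composition law in the \emph{first} argument, which is the step I expect to be the main obstacle because it requires tracking how a concatenation in the first slot interacts with the intermediate expansions. The idea is to apply \eqref{z1} to the intermediate expansions defining the left side: for each $k$, $E_{m,n}(s^{(1)}s^{(2)},t^{(k)})=E_{m,n}(s^{(1)},t^{(k)})\,E_{m,n}(s^{(2)},C_{m,n}(s^{(1)},t^{(k)}))$, and since $|s^{(2)}|\ge 1$ we may assume the second factor is nonempty (if $s^{(2)}$ is empty both sides of \eqref{z3} collapse and there is nothing to prove), so the last term of the whole equals the last term of $E_{m,n}(s^{(2)},C_{m,n}(s^{(1)},t^{(k)}))$. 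Now $C_{m,n}(s^{(1)},t^{(k)})$ is the length-$k$ prefix of $C_{m,n}(s^{(1)},t)$ — this is exactly \eqref{z4} specialized, or can be seen directly since the intermediate expansions of $t^{(k)}$ are a prefix of those of $t$ — so $C_{m,n}(s^{(1)},t^{(k)})=\bigl(C_{m,n}(s^{(1)},t)\bigr)^{(k)}$. Therefore the $k$th term of $C_{m,n}(s^{(1)}s^{(2)},t)$, namely $m+n$ minus that last term, is $m+n$ minus the last term of $E_{m,n}\bigl(s^{(2)},(C_{m,n}(s^{(1)},t))^{(k)}\bigr)$, which is by definition the $k$th term of $C_{m,n}\bigl(s^{(2)},C_{m,n}(s^{(1)},t)\bigr)$. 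Since this holds for every $k$ and both sides have length $|t|$, we get \eqref{z3}.

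A couple of small points to watch. One must check that $C_{m,n}(s^{(1)},t)$ indeed has values in $\{m,n\}$ so that it is a legal second argument for $E_{m,n}$ and $C_{m,n}$ downstream — this is immediate from the definition of $C_{m,n}$, whose terms are $m+n$ minus a term that is itself in $\{m,n\}$ (since intermediate expansions $E_{m,n}(\cdot,\cdot^{(k)})$ have values in $\{m,n\}$ by construction). One must also handle the degenerate cases where $s^{(2)}$ or $t^{(2)}$ is empty; in each such case the relevant identity reduces to a tautology or to \eqref{z1}/\eqref{z2} directly. Finally, the induction in \eqref{z2} should be phrased carefully: the base case $|t^{(2)}|=1$ is the definitional recursion, and the inductive step writes $t^{(2)}=\tau\,(t^{(2)})_{|t^{(2)}|}$, applies the definition to move the last starting point out, and invokes the inductive hypothesis on $\tau$. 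None of these is difficult, but stating them keeps the argument honest.
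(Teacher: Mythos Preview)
Your argument is correct, and in fact supplies what the paper does not: the paper omits the proof of this proposition entirely, pointing instead to semi-formal notes in \cite{CSY}. So there is no ``paper's proof'' to compare against beyond the one-line remark after Proposition~\ref{EC} that \eqref{z1} goes by induction on $|t|$. Your scheme --- take \eqref{z1} as Proposition~\ref{EC}, read \eqref{z2} off the recursive definition of $E_{m,n}$, then verify \eqref{z4} and \eqref{z3} termwise using the definition of $C_{m,n}$ together with \eqref{z2} and \eqref{z1} respectively --- is a clean way to fill the gap, and your handling of the prefix identity $C_{m,n}(s^{(1)},t^{(k)})=\bigl(C_{m,n}(s^{(1)},t)\bigr)^{(k)}$ and of the degenerate empty-$s^{(2)}$ case is exactly right.

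One small correction of phrasing: the recursive definition of $E_{m,n}$ peels off the \emph{first} term of the starting-point sequence, not the last, so the statement $E_{m,n}(s,t^{(1)}x)=E_{m,n}(E_{m,n}(s,t^{(1)}),x)$ for a single letter $x$ is not literally ``the definitional recursion'' --- it already requires fully unwinding the recursion, i.e.\ an induction on $|t^{(1)}|$. Once that is in hand your induction on $|t^{(2)}|$ is fine, but it is slightly cleaner just to induct on $|t^{(1)}|$ directly (or to invoke the three-bullet uniqueness characterization of $E_{m,n}(s,t)$ in the definition, from which \eqref{z2} is immediate).
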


We omit the proofs of these statements. There are some semi-formal proofs provided in \cite{CSY}.

\section{A conjecture about orbits of $C_{m, n}.$}
For a fixed finite sequence $s$ and integer $k>0,$ the function $C_{m, n}(s, -)$ maps $\{m, n\}^k$ to itself. In this section, we discuss the nature of this function, such as bijectivity. We also formulate a conjecture about the orbit lengths of this function.

\begin{proposition} \label{jk} Let $m \neq n,$ $s$ be any finite sequence, and $k \ge j > 0.$ Let $t^{(1)}, t^{(2)} \in \{m, n\}^k.$ Then the sequence $t^{(1)}, t^{(2)}$ agree in their first $j$ terms iff the two sequences $C_{m, n}(s, t^{(1)}), C_{m, n}(s, t^{(2)})$ agree in their first $j$ terms.
\end{proposition}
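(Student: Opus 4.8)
The plan is to reduce the statement to the behavior of $E_{m,n}$ on prefixes, using the defining relation between $C$ and $E$ together with the concatenation laws \eqref{z4} and \eqref{z2}. First I would unpack the definition of $C_{m,n}(s,t)$: by construction, its $k^{\text{th}}$ term depends only on the last term of $E_{m,n}(s, t^{(k)})$, where $t^{(k)}$ is the length-$k$ prefix of $t$. So the first $j$ terms of $C_{m,n}(s,t)$ are determined by, and determine (together with the fixed data $m,n,s$), the last terms of the expansions $E_{m,n}(s,t^{(1)}), \ldots, E_{m,n}(s,t^{(j)})$. Thus it suffices to show that the length-$j$ prefix of $t$ determines and is determined by the tuple of last terms $\bigl(E_{m,n}(s,t^{(k)})_{|E_{m,n}(s,t^{(k)})|}\bigr)_{k=1}^{j}$, i.e.\ that the map $t^{(j)} \mapsto \bigl(\text{last term of } E_{m,n}(s,t^{(k)})\bigr)_{k\le j}$ is injective on $\{m,n\}^j$.

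The cleanest route to that injectivity is induction on $j$. For $j=1$: $E_{m,n}(s,t_1)$ is by definition the sequence with values in $\{m,n\}$, first term $t_1$, and $R(\cdot)=s$; its last term equals $t_1$ if $|s|$ is odd and $m+n-t_1$ if $|s|$ is even (this computation is already spelled out in the paragraph preceding Proposition~\ref{EC}). Either way, the last term of $E_{m,n}(s,t^{(1)})$ determines $t_1$. For the inductive step, suppose the first $j-1$ terms of $t$ are already pinned down by the first $j-1$ last-terms; I want to recover $t_j$ from the last term of $E_{m,n}(s,t^{(j)})$. Writing $t^{(j)} = t^{(j-1)}\,(t_j)$ and applying \eqref{z2}, we get $E_{m,n}(s,t^{(j)}) = E_{m,n}\bigl(E_{m,n}(s,t^{(j-1)}),\,t_j\bigr)$. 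Set $w := E_{m,n}(s,t^{(j-1)})$, which is already determined by the inductive hypothesis; then $E_{m,n}(s,t^{(j)})$ is the length-1-starting-point expansion of $w$, so by the same parity computation as the base case its last term is $t_j$ if $|w|$ is odd and $m+n-t_j$ if $|w|$ is even. In both cases the last term of $E_{m,n}(s,t^{(j)})$ determines $t_j$, completing the induction. Running this argument in both directions gives the ``iff'': agreeing first $j$ terms of $t^{(1)},t^{(2)}$ force agreeing last-terms of all prefixes up to length $j$, hence agreeing first $j$ terms of the $C$-images, and conversely.

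I should be slightly careful about one bookkeeping point, which is where I expect the only real friction: the length $|w| = |E_{m,n}(s,t^{(j-1)})|$ can differ between $t^{(1)}$ and $t^{(2)}$ a priori, so the parity of $|w|$ that governs the "flip or not" step is itself data I need to control. But the inductive hypothesis already gives that $E_{m,n}(s,t^{(j-1)})$ is \emph{the same sequence} for both $t^{(1)}$ and $t^{(2)}$ (since their first $j-1$ terms agree), so its length and hence its parity coincide; there is no real obstacle, just the need to state the induction hypothesis as "the actual prefix-expansions agree," not merely "the last terms agree." An alternative, possibly shorter, write-up would phrase everything in terms of \eqref{z4}: $C_{m,n}(s,t^{(j)}) = C_{m,n}(s,t^{(j-1)})\,C_{m,n}\bigl(E_{m,n}(s,t^{(j-1)}),t_j\bigr)$, so the first $j-1$ terms of $C_{m,n}(s,t)$ depend only on $t^{(j-1)}$, and the $j^{\text{th}}$ term depends on $t^{(j-1)}$ and $t_j$ via a map that is, for fixed first argument, a bijection in $t_j$ (a length-1 torsion is just "identity or complement" depending on a parity). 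Either formulation closes the proof; I would lead with the $E$-based induction since the parity computation it relies on is already in hand from the discussion before Proposition~\ref{EC}.
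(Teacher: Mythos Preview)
Your proposal is correct and follows essentially the same approach as the paper. Both directions hinge on the observation that the $j^{\text{th}}$ term of $C_{m,n}(s,t)$ is the (complement of the) last term of $E_{m,n}(w,t_j)$ with $w=E_{m,n}(s,t^{(j-1)})$, and that for fixed $w$ this is a bijection in $t_j$; the paper phrases this as ``the two $j^{\text{th}}$ expansions are complements'' while you phrase it as a parity computation on $|w|$, but these are the same fact, and your inductive packaging is just a slightly more explicit version of the paper's one-paragraph contrapositive.
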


\begin{proof}
The terms of the sequences $C_{m, n}(s, t^{(1)})$ are determined by the intermediate expansions of $E_{m, n}(s, t^{(1)}).$ The first $j$ digits of $C_{m, n}(s, t^{(1)})$ are determined by the first $j$ intermediate expansions of $E_{m, n}(s, t^{(1)}).$ Therefore, if $t^{(1)}, t^{(2)}$ agree in their first $j$ terms, then $C_{m, n}(s, t^{(1)}), C_{m, n}(s, t^{(2)})$ agree in their first $j$ terms.

On the other hand, suppose that $t^{(1)}, t^{(2)}$ agree in their first $j-1$ terms but not their first $j$ terms. As in the previous paragraph, $C_{m, n}(s, t^{(1)}), C_{m, n}(s, t^{(2)})$ agree in their first $j-1$ terms. The $j^{\text{th}}$ intermediate expansions of $E_{m, n}(s, t^{(1)})$ and $E_{m, n}(s, t^{(2)})$ are expansions of the same sequence, namely the common $(j-1)^{\text{th}}$ intermediate expansion of $E_{m, n}(s, t^{(i)})$ but with different starting points: One starting point is $m,$ and the other starting point is $n$ in an arbitrary order. Therefore, the two $j^{\text{th}}$ expansions are exactly ``complement sequences'' i.e. the two sequences have the same length and elementwise add to $m+n, m+n, m+n, \ldots.$ Therefore, these two sequences must have different last elements, and the two sequences  $C_{m, n}(s, t^{(1)}), C_{m, n}(s, t^{(2)})$ have different elements at index $j.$
\end{proof}

Setting $j=k,$ we see that $C_{m, n}(s, -)$ is a bijection on $\{m, n\}^k.$ Therefore, it makes sense to discuss the orbits and orbit lengths of $C_{m, n}(s, -).$ The following proposition shows that the orbit lengths must be powers of $2.$

\begin{proposition}  Let $m \neq n,$ $s$ be any finite sequence, and $k > 0.$ The orbit lengths of the map $C_{m, n}(s, -)$ on the set $\{m, n\}^k$ are all powers of $2.$ \end{proposition}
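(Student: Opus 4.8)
The plan is to exploit the ``prefix compatibility'' supplied by Proposition~\ref{jk} in order to build a tower of compatible bijections and then control orbit lengths one prefix length at a time. First I would observe that Proposition~\ref{jk} (with $j$ the length of the common prefix) says precisely that the first $j$ terms of $C_{m,n}(s,t)$ depend only on the first $j$ terms of $t$. Hence for each $j$ with $1 \le j \le k$ there is a well-defined map $C_j \colon \{m,n\}^j \to \{m,n\}^j$, obtained by truncation: $C_j$ sends the first $j$ terms of $t$ to the first $j$ terms of $C_{m,n}(s,t)$. These maps are compatible with the truncation projections $\pi_j \colon \{m,n\}^{j+1} \to \{m,n\}^j$, namely $\pi_j \circ C_{j+1} = C_j \circ \pi_j$, and $C_k = C_{m,n}(s,-)$. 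The ``only if'' direction of Proposition~\ref{jk} shows each $C_j$ is injective, hence a bijection of the finite set $\{m,n\}^j$.

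Next I would isolate the following elementary lemma. Let $f$ be a bijection of a finite set $X$, let $\pi \colon X \to Y$ be a surjection all of whose fibers have exactly two elements, and let $g$ be a bijection of $Y$ with $\pi \circ f = g \circ \pi$. Then for every $x \in X$ the length of the $f$-orbit of $x$ equals either the length $\ell$ of the $g$-orbit of $\pi(x)$, or $2\ell$. To prove it, note that $g^{\ell}(\pi(x)) = \pi(x)$ forces $\pi(f^{\ell}(x)) = \pi(x)$, so $f^{\ell}(x)$ lies in the two-element fiber $\{x, x'\}$ over $\pi(x)$; moreover any $L$ with $f^{L}(x) = x$ satisfies $g^{L}(\pi(x)) = \pi(x)$, so $\ell \mid L$, and in particular $\ell$ divides the $f$-orbit length of $x$. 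If $f^{\ell}(x) = x$, the orbit length is exactly $\ell$. If $f^{\ell}(x) = x' \neq x$, then likewise $\pi(f^{\ell}(x')) = \pi(x')$, so $f^{\ell}(x') \in \{x, x'\}$; since $f^{\ell}$ is a bijection it cannot send both $x$ and $x'$ to $x'$, so $f^{\ell}(x') = x$, hence $f^{2\ell}(x) = x$; combined with $\ell \mid (\text{orbit length})$ and orbit length $\neq \ell$, the orbit length is exactly $2\ell$.

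Finally I would run the induction on $j$. For $j = 1$ the set $\{m,n\}^1$ has two elements, so $C_1$ has orbit lengths $1$ or $2$, both powers of $2$. For the inductive step I apply the lemma with $f = C_{j+1}$, $Y = \{m,n\}^j$, $g = C_j$, and $\pi = \pi_j$ the truncation map, whose fibers (``append $m$'' versus ``append $n$'') have exactly two elements: the $C_{j+1}$-orbit length of any $t \in \{m,n\}^{j+1}$ equals the $C_j$-orbit length of its length-$j$ truncation times $1$ or $2$, hence is again a power of $2$. Taking $j = k$ gives the claim for $C_k = C_{m,n}(s,-)$. I expect the only delicate point to be the second case of the lemma, where bijectivity of the iterate $f^{\ell}$ is exactly what prevents the orbit from ``collapsing'' and forces the length to be precisely $2\ell$ rather than merely a divisor of $2\ell$; everything else is routine bookkeeping with the prefix structure already established in Proposition~\ref{jk}.
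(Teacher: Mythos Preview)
Your proof is correct, and it takes a genuinely different route from the paper's argument. The paper argues by contradiction: writing the orbit length as $p = r q$ with $r$ the largest power of $2$ dividing $p$ and $q$ odd, it looks at the elements $t^{(0)}, t^{(r)}, t^{(2r)}, \ldots, t^{(p)}$, uses Proposition~\ref{jk} to show that the index-$(j+1)$ term alternates along this list (where $j$ is the length of the maximal common prefix of $t^{(0)}$ and $t^{(r)}$), and derives a parity contradiction from the oddness of $q = p/r$. Your approach is instead structural: you factor $C_{m,n}(s,-)$ through a tower of truncation maps $C_j$ and prove the clean ``$\ell$ or $2\ell$'' lemma for bijections compatible with a $2$-to-$1$ projection. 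This buys you slightly more than the paper's argument: along the way you establish that the orbit length of any $t$ is a multiple of the orbit length of each of its prefixes, a fact the paper later invokes (``the orbit of $1^{2j}$ must be at least as big as the orbit of $1^{2j-1}$'') without separate justification. The paper's argument is shorter once one has the right parity idea, but yours exposes the inverse-limit structure more transparently.
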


\begin{proof} For shorthand, let $f$ be the function $C_{m, n}(s, -).$ Suppose that $t^{(0)}, t^{(1)}, \ldots, t^{(p-1)}, t^{(p)} = t^{(0)}$ is one orbit under $f$ of length $p$. Write $p$ in the form $2^rq$ where $q$ is odd. Assume, for the sake of contradiction, that $p$ is not a power of $2.$ Then $q>1$ and $r<p.$

Consider the two sequences $t^{(0)}, t^{(r)}.$ Since $r<p,$ these two sequences are different elements of $\{m, n\}^k,$ so they do not agree in their first $k$ elements. Let $j$ be maximal so that these two sequences agree in their first $j$ terms. Then $j<k.$ By Proposition \ref{jk}, the two sequences $f(t^{(0)}), f(t^{(r)})$ agree in their first $j$ terms but not their first $j+1$ terms. These two sequences equal $t^{(1)}, t^{(r+1)}$ by construction. By induction, we have that for $i = 0, 1, \ldots, n-r,$ the two sequences $t^{(i)}, t^{(i+r)}$ agree in their first $j$ terms but not their first $j+1$ terms. In particular, consider $i = 0, r, 2r, \ldots, n-r.$ 

We have the sequences with values in $\{m, n\}^k$ $t^{(0)}, t^{(r)}, t^{(2r)}, \ldots, t^{(n-r)}, t^{(n)} = t^{(0)}.$ Any two consecutive elements agree in their first $j$ terms but not their first $j+1$ terms. Therefore, all terms have the same first $j$ terms, but their index $j+1$ terms alternate. The quantity $n/r$ is odd, so there is an odd number of alternations from $t^{(0)}$ to $t^{(n)}.$ This is a contradiction because these two sequences are actually equal (hence have the same index $j+1$ term). Therefore, $p$ must be a power of $2.$
\end{proof}

The following proposition uses another characterization of orbit lengths to give an upper bound.

\begin{proposition} \label{orbit}
let $m \neq n,$ $k > 0,$ $s$ be any sequence, and $t \in \{m, n\}^k.$ The size of the orbit of $t$ under the map $C_{m, n}(s, -)$ is at most $2^{\lceil k/2 \rceil}.$ 
\end{proposition}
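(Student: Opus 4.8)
The plan is to iterate the relation \eqref{z4}: $C_{m,n}(s, t^{(1)}t^{(2)}) = C_{m,n}(s, t^{(1)}) C_{m,n}(E_{m,n}(s,t^{(1)}), t^{(2)})$. Applying $C_{m,n}(s,-)$ to an element $t \in \{m,n\}^k$ repeatedly, write $t = t^{(1)} t^{(2)}$ where $t^{(1)}$ is the first term (or first two terms) and $t^{(2)}$ is the rest. The key point is that the first coordinate (or first two coordinates) of $C_{m,n}(s,t)$ depends only on the first coordinate(s) of $t$, by Proposition \ref{jk}; and there are only finitely many possibilities for the ``state'' that governs how the map acts on the remaining $k-1$ (or $k-2$) coordinates. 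I expect the cleanest bookkeeping comes from peeling off \emph{two} coordinates at a time rather than one, since the orbit bound is $2^{\lceil k/2\rceil}$, not $2^k$.

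Concretely, here is the structure I would carry out. First, reduce to the case where the orbit of $t$ has length exactly $2^{\lceil k/2 \rceil}$ — or rather, just prove the bound directly by induction on $k$. For the inductive step, split $t = ab\, t''$ with $a = t_1, b = t_2$ (handle $k=1$ separately, where the orbit has size $1$ or $2 \le 2^1$). By Proposition \ref{jk} with $j=2$, the first two coordinates of $C_{m,n}(s,t)$ depend only on $(a,b)$, so the map $C_{m,n}(s,-)$ induces a well-defined map on $\{m,n\}^2$; its orbits have length $1, 2,$ or $4$, but by the power-of-$2$ structure and a parity/counting argument one checks the induced map on pairs has order dividing... actually the relevant fact is simply that after applying $C_{m,n}(s,-)$ some number of times $d \mid 2^{\lceil k/2\rceil}$ with $d \le 2$ we return the first two coordinates to $(a,b)$ — wait, that's not quite it either. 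Let me restate: after applying the map twice, the first two coordinates return to a value in the $\le 4$-element orbit; the honest claim is that the first coordinate alternates with period $\le 2$ and similarly the second, so after $\le 4$ applications the first two coordinates cycle. Then \eqref{z4} shows that the orbit of $t''$ under the composite map $C_{m,n}(E_{m,n}(s, ab\text{-prefix stuff}), -)$ on $\{m,n\}^{k-2}$ controls the rest, and by induction that orbit has length at most $2^{\lceil (k-2)/2 \rceil} = 2^{\lceil k/2\rceil - 1}$. Combining: the orbit length of $t$ divides (orbit length on first two coords) $\times$ (orbit length of $t''$ under the induced composite) $\le 4 \cdot 2^{\lceil k/2 \rceil -1} = 2^{\lceil k/2\rceil +1}$, which overshoots by a factor of $2$.

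To fix the overshoot — and this is where the argument needs the real idea — I would observe that the induced map on the first two coordinates cannot have an orbit of length $4$ while \emph{simultaneously} the composite map on $t''$ has a full-length orbit, because the period-$4$ behavior on the pair forces the ``state'' $E_{m,n}(s, t_1 t_2)$ feeding into the recursion on $t''$ to return after only $2$ steps (the state after two applications of the top-level map depends on the pair having returned), cutting the effective recursion depth. More carefully: the sequence of states $s, E_{m,n}(s, (t_1t_2)^{\text{after }1}), E_{m,n}(s, (t_1 t_2)^{\text{after }2}), \ldots$ is eventually periodic with period dividing the orbit length of the pair, and the recursion on $t''$ at stage $i$ uses state number $i$; so the orbit length of $t$ is the lcm of the pair-orbit-length and (orbit length of $t''$ under the product of one period's worth of these composite maps), and one shows that product map has orbit $\le 2^{\lceil k/2\rceil - 1}$ on $\{m,n\}^{k-2}$ by the inductive hypothesis applied with care. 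The bookkeeping of which state acts at which stage is the main obstacle; I expect to organize it by noting the first coordinate of $C_{m,n}(s,-)$ is \emph{determined} by $t_1$ alone (Proposition \ref{jk}, $j=1$), so the first coordinate is eventually $2$-periodic, and then the second coordinate is determined by $(t_1, t_2)$, peeling coordinates one at a time but tracking that two consecutive peels only cost one factor of $2$ in the final count. An alternative, possibly cleaner route: prove by induction that the orbit of $t$ embeds into the orbit of its length-$(k-1)$ prefix-image data crossed with a $2$-element set only on the steps where the parity changes, giving directly $|\text{orbit}(t)| \le 2 \cdot |\text{orbit}(\text{restricted to } k-1)|$ but with equality impossible for two consecutive $k$'s — I would try both and keep whichever makes the $\lceil k/2\rceil$ exponent fall out without fractional losses.
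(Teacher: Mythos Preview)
Your proposal has a genuine gap: you identify the factor-of-$2$ overshoot yourself and then offer only heuristics for closing it. The suggestion that the pair-orbit cannot have length $4$ while the tail-orbit is full, or that ``two consecutive peels only cost one factor of $2$,'' is not substantiated; in fact the induced map on the first two coordinates \emph{can} have an orbit of length $4$ (take $k=2$, any $s$ of odd length, and check directly), so the saving cannot come from the pair-orbit alone. The difficulty with peeling from the front is exactly the one you name: the ``state'' $E_{m,n}(s,t_1t_2)$ that drives the recursion on $t''$ changes at every iteration, and keeping track of the composite of these varying maps is where your sketch stops being a proof.

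The paper avoids this entirely by working from the \emph{back}. Using \eqref{z3}, the claim ``orbit length divides $r=2^{\lceil k/2\rceil}$'' becomes the single equation $C_{m,n}(s^r,t)=t$. Let $t'$ be $t$ with its last two terms removed. By Proposition~\ref{jk} the first $k-2$ terms of $C_{m,n}(s^r,t)$ depend only on $t'$, and by the inductive hypothesis $C_{m,n}(s^{r/2},t')=t'$, hence $C_{m,n}(s^r,t')=t'$. For the remaining two terms one only needs that $E_{m,n}(s^r,t')$ and $E_{m,n}(s^r,t'')$ have even length (where $t''$ is $t$ minus its last term). The key computation is
\[
E_{m,n}(s^r,t')=E_{m,n}(s^{r/2},t')\,E_{m,n}\bigl(s^{r/2},C_{m,n}(s^{r/2},t')\bigr)=E_{m,n}(s^{r/2},t')^2,
\]
using \eqref{z1} and the inductive hypothesis again; a square has even length, and the same trick handles $t''$. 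The ``squaring'' step is the missing idea that makes the $\lceil k/2\rceil$ exponent fall out cleanly.
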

\begin{proof}
We proceed by induction on $k.$ Our base cases are $k=1, 2.$ If $k=1,$ then the set $\{m, n\}^k$ has size $2,$ so an orbit has size at most $2,$ as desired. 

If $k=2,$ then we must show that $C_{m, n}(s, C_{m, n}(s, t)) = t.$ By identity (\ref{z3}), this is equivalent to showing that $C_{m, n}(ss, t) = t.$ Let $t' = C_{m, n}(ss, t).$ To show that the first terms of $t'$ and $t$ are equal, observe that $ss$ has an even number of terms, so $E_{m, n}(ss, t_1)$ has an even number of runs, so $t'_1 = t_1.$ For the second terms, observe that $E_{m, n}(ss, t_1) = E_{m,n}(s, t_1) E_{m, n}(s, C_{m, n}(s, t_1)).$ Both subsequences on the right hand side have the same number of terms (since they are either equal or complements), so the left hand side has an even number of terms, so $E_{m, n}(ss, t)$ has an even number of runs, so $t'_2 = t_2.$

Now assume that $k>2$ and that the proposition is true for smaller $k.$ Let $r = 2^{\lceil k/2 \rceil}.$ We must show that $C_{m, n}(s^r, t) = t.$ (This is a repeated application of identity (\ref{z3}).) Let $t'$ be the sequence $t$ excluding its last two terms so that $|t'| = k-2>0.$ By the inductive hypothesis, $C_{m, n}(s^{r/2}, t') = t'.$ Therefore, $C_{m, n}(s^{r/2} s^{r/2}, t') = t',$ and the sequence $C_{m, n}(s^r, t)$ agrees with $t$ in all terms except possibly the last two. To verify that the second-to-last terms are equal, we need to check that $E_{m, n}(s^r, t')$ has even length. Indeed, 
\begin{align*}E_{m, n}(s^r, t) &= E_{m, n}(s^{r/2} s^{r/2}, t) 
\\&= E_{m, n}(s^{r/2}, t) E_{m, n}(s^{r/2}, C_{m, n}(s^{r/2}, t)) 
\\&= E_{m, n}(s^{r/2}, t)^2.\end{align*}
To verify that the last terms are equal, we need to check that $E_{m, n}(s^r, t'')$ has even length, where $t''$ is the sequence $t$ excluding just its last term. Indeed, 
\[ E_{m, n}(s^r, t'') = E_{m,n}(E_{m, n}(s^r, t'), t_{|t|-1}) = E_{m, n}(E_{m, n}(s^{r/2}, t)^2, t_{|t|-1}),\]
and as in the $k=2$ case, an expansion of the form $E_{m, n}(u^2, x)$ has even length if $|x| = 1$.

\end{proof}

The next natural questions are if there are stronger bounds on the sizes of orbits in $\{m, n\}^k.$ Unfortunately, lwer bounds on orbits remain quite mysterious, but we have startling conjectures that imply that our upper bound is tight.

\begin{conjecture} \label{conj-orbit} Let $n$ be even, $j>0,$ and $t = 1^{2j-1}.$ Then the orbit of $t$ under the map $C_{1,n}(1, -)$ has length $2^j.$ \end{conjecture}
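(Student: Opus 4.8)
The plan is to reduce Conjecture~\ref{conj-orbit} to a purely parity-theoretic statement about the expansion function --- the ``equivalence'' alluded to in the introduction --- and then to attack that statement by induction on a growth parameter. Fix an even $n$ and an integer $j>1$; the case $j=1$ is immediate, since $C_{1,n}(1,-)$ sends $(1)\mapsto(n)\mapsto(1)$ on $\{1,n\}^{1}$, an orbit of size $2$. Write $f=C_{1,n}(1,-)$ on $\{1,n\}^{2j-1}$. By the two propositions preceding the conjecture, the orbit of $1^{2j-1}$ has size a power of $2$ that is at most $2^{\lceil(2j-1)/2\rceil}=2^{j}$, so it equals $2^{j}$ exactly when $f^{2^{j-1}}(1^{2j-1})\neq 1^{2j-1}$; and iterating (\ref{z3}) gives $f^{2^{j-1}}(1^{2j-1})=C_{1,n}(1^{2^{j-1}},1^{2j-1})$. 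Applying (\ref{z4}) with $t^{(1)}=1^{2j-2}$, $t^{(2)}=(1)$ rewrites this as $C_{1,n}(1^{2^{j-1}},1^{2j-2})\cdot C_{1,n}\!\big(E_{1,n}(1^{2^{j-1}},1^{2j-2}),(1)\big)$. Since $1^{2j-2}$ has length $2(j-1)$, its orbit under $f$ has length a power of $2$ at most $2^{\lceil(2j-2)/2\rceil}=2^{j-1}$ (Proposition~\ref{orbit}), hence divides $2^{j-1}$, so $C_{1,n}(1^{2^{j-1}},1^{2j-2})=f^{2^{j-1}}(1^{2j-2})=1^{2j-2}$. Therefore the orbit of $1^{2j-1}$ has size $2^{j}$ precisely when the single letter $C_{1,n}\!\big(E_{1,n}(1^{2^{j-1}},1^{2j-2}),(1)\big)$ differs from $1$.

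To unwind that letter, recall that for a length-one starting sequence the definition of $C$ makes $C_{1,n}(u,(1))$ equal to $1+n$ minus the last term of $E_{1,n}(u,(1))$; with $u=E_{1,n}(1^{2^{j-1}},1^{2j-2})$ and using (\ref{z2}) to identify $E_{1,n}(u,(1))=E_{1,n}(1^{2^{j-1}},1^{2j-1})$, the orbit has size $2^{j}$ iff the last term of $E_{1,n}(1^{2^{j-1}},1^{2j-1})$ equals $1$. Now $E_{1,n}(1^{2^{j-1}},1^{2j-1})$ is a word over $\{1,n\}$ that begins with $1$ and whose run-length sequence is $R\big(E_{1,n}(1^{2^{j-1}},1^{2j-1})\big)=E_{1,n}(1^{2^{j-1}},1^{2j-2})$ (again by (\ref{z2}) together with the length-one case); since it starts with a run of $1$'s, its last term is $1$ precisely when its number of runs is odd, i.e.\ precisely when $\big|E_{1,n}(1^{2^{j-1}},1^{2j-2})\big|$ is odd. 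This proves the equivalence: for every even $n$, Conjecture~\ref{conj-orbit} (for all $j$) holds iff $\big|E_{1,n}(1^{2^{i}},1^{2i})\big|$ is odd for all $i\ge 1$, which is the form stated in the introduction (with $i=j-1$). This half uses only the identities already recorded and I regard it as routine.

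The remaining task, that $\big|E_{1,n}(1^{2^{i}},1^{2i})\big|$ is odd for all even $n$ and all $i\ge 1$, is the real content, and I would try strong induction on $i$ with a deliberately enriched hypothesis. Rather than tracking a single length modulo $2$, I would carry along the whole finite family of words $A_{i,k}:=E_{1,n}(1^{2^{i}},1^{k})$ for $1\le k\le 2i$, recording for each the parity of its length, its first and last terms, and a fixed bounded list of coarse statistics modulo $2$ (for instance its number of $1$'s and its number of $1$'s in odd positions). These are constrained to propagate by the identities in hand: $R(A_{i,k})=A_{i,k-1}$ converts lengths into entry-sums, (\ref{z1}) applied to $1^{2^{i}}=1^{2^{i-1}}1^{2^{i-1}}$ writes $A_{i,k}$ as $A_{i-1,k}$ followed by $E_{1,n}\big(1^{2^{i-1}},C_{1,n}(1^{2^{i-1}},1^{k})\big)$, and --- as in the first part --- $C_{1,n}(1^{2^{i-1}},1^{k})=1^{k}$ for every $k\le 2i-2$. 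In parallel one should incorporate the reduction mentioned in the introduction, that for fixed $i$ the parity of $\big|E_{1,n}(1^{2^{i}},1^{2i})\big|$ depends on $n$ through only finitely many residues (plausibly it is eventually periodic in $n$ with period a power of $2$), so that the induction need only be carried out for those finitely many $n$.

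The step I expect to be the genuine obstacle is closing the induction at $k=2i-1$. Exactly there the ``doubling'' identity (\ref{z1}) is circular: the word $C_{1,n}(1^{2^{i-1}},1^{2i-1})$ --- equivalently, whether the torsion of $1^{2i-1}$ by $1^{2^{i-1}}$ keeps its last letter equal to $1$ or flips it to $n$ --- is precisely the instance of the conjecture one level down, so the coarse recursion supplies no information at the one coordinate that matters. Getting past this appears to require genuinely understanding the local structure of the words $E_{1,n}(1^{2^{i}},1^{k})$ near their right end and how one further expansion redistributes parity there, which is closely tied to understanding $K(1,n)$ itself; that is presumably why the statement has resisted proof and is only verified for $i\le 13$. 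A more optimistic alternative, worth pursuing alongside, is to look for a closed form or generating function for $\big|E_{1,n}(1^{2^{i}},1^{2i})\big|$ in $i$ and $n$ and read the parity off directly; the ``finitely many $n$'' phenomenon suggests the $n$-dependence may be tame enough for such a description to exist.
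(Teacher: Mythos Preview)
The statement you are attempting to prove is labeled a \emph{conjecture} in the paper and is not proved there; the paper's own contribution is the equivalence in Proposition~\ref{conj-equivalence} together with the empirical verification for $j\le 13$ via Proposition~\ref{mod-equivalence}. Your first two paragraphs rederive exactly this equivalence, and correctly: you show that for each $j\ge 2$ the orbit of $1^{2j-1}$ under $C_{1,n}(1,-)$ has size $2^{j}$ if and only if $\big|E_{1,n}(1^{2^{j-1}},1^{2j-2})\big|$ is odd. This is the same content as the paper's Proposition~\ref{conj-equivalence} (after the shift $i=j-1$), and your argument uses the same ingredients --- identities (\ref{z3}), (\ref{z4}), and Proposition~\ref{orbit} --- as the paper does; if anything your route is slightly more direct, since you obtain a pointwise equivalence in $j$ rather than the paper's ``for all $j$'' on each side.

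Your third and fourth paragraphs, however, are a plan that does not close, as you yourself acknowledge. The doubling via (\ref{z1}) at level $k=2i-1$ requires knowing $C_{1,n}(1^{2^{i-1}},1^{2i-1})$, which is exactly the instance of the conjecture one step down, so the proposed induction is circular at the one coordinate that matters; the coarse statistics you track (parities of lengths, counts of $1$'s, first and last terms) are determined by that bit rather than independent of it. The ``finitely many $n$'' reduction you invoke is precisely Proposition~\ref{mod-equivalence}: it reduces each fixed $j$ to a finite check but gives no leverage for the induction in $j$. In short, your equivalence is correct and matches the paper's; the remaining step is the open problem, and your outline does not resolve it --- nor does the paper.
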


The following proposition provides a somewhat more concrete equivalent statement.
\begin{proposition} \label{conj-equivalence} Let $n$ be even. The conjecture \ref{conj-orbit} for the case $n$ is equivalent to the statement that the sequence $E_{1, n}\left(1^{2^j}, 1^{2j}\right)$ has odd length for all $j>0.$
\end{proposition}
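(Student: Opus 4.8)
The plan is to connect the orbit length of $t = 1^{2j-1}$ under $C := C_{1,n}(1,-)$ with the parity of the length of iterated expansions $E_{1,n}(1^{2^i}, 1^{2j})$. The key computational tool is identity (\ref{z3}), which gives $C_{1,n}(1^p, t) = C^p(t)$ for any $p>0$; thus the orbit of $t$ has length dividing $2^j$ (by Proposition \ref{orbit}, since $|t|=2j-1$ and $\lceil (2j-1)/2\rceil = j$), and the orbit has length exactly $2^j$ iff $C^{2^{j-1}}(t) \neq t$, i.e. iff $C_{1,n}(1^{2^{j-1}}, 1^{2j-1}) \neq 1^{2j-1}$.

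First I would reduce this inequality to a single-coordinate statement. By the argument already used in Proposition \ref{orbit}, if $|t| = 2j-1$ then $C_{1,n}(1^{2^{j-1}}, t)$ agrees with $t$ in all but possibly its last term: indeed writing $t'$ for $t$ with its last term removed, the inductive bound gives $C_{1,n}(1^{2^{j-1}}, t') = t'$ since $|t'| = 2j-2$ and $\lceil(2j-2)/2\rceil = j-1$, and by Proposition \ref{jk} applied with the first $2j-2$ coordinates this forces $C_{1,n}(1^{2^{j-1}}, t)$ to share those coordinates with $t$. So the orbit has full length $2^j$ iff the \emph{last} term of $C_{1,n}(1^{2^{j-1}}, t)$ differs from the last term of $t = 1^{2j-1}$, which is $1$; equivalently, iff the last term of $C_{1,n}(1^{2^{j-1}}, 1^{2j-1})$ equals $n$. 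By the definition of $C_{m,n}$, the $k$-th term of $C_{1,n}(s,t)$ is $m+n$ minus the last term of $E_{1,n}(s, t^{(k)})$; with $k = |t| = 2j-1$ this last term is $n$ precisely when the last term of $E_{1,n}(1^{2^{j-1}}, 1^{2j-1})$ is $1$.

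Next I would identify when the last term of $E_{1,n}(1^{2^{j-1}}, 1^{2j-1})$ equals $1$. Since $E_{1,n}(s,t)$ has first term $t_{|t|} = 1$ and, by the last bullet of the definition, all its intermediate run-length iterates begin with $1$, the last term of $E_{1,n}(1^{2^{j-1}}, 1^{2j-1})$ equals its first term $1$ iff $E_{1,n}(1^{2^{j-1}}, 1^{2j-1})$ has odd length. Now I would relate this length to the length of $E_{1,n}(1^{2^j}, 1^{2j})$. Using Proposition \ref{EC} (identity (\ref{z1})) with $s^{(1)} = s^{(2)} = 1^{2^{j-1}}$ and the fact that $C_{1,n}(1^{2^{j-1}}, 1^{2j-1})$ agrees with $1^{2j-1}$ except possibly in the last coordinate, together with identity (\ref{z2}) to peel off the extra starting point $t_{2j} = 1$, I would show that $E_{1,n}(1^{2^j}, 1^{2j})$ is built as an expansion (by one more starting point $1$) of a concatenation $E_{1,n}(1^{2^{j-1}}, 1^{2j})^{\,2}$ or a near-complement pair thereof; in either case its length has the same parity as twice the length of $E_{1,n}(1^{2^{j-1}}, 1^{2j-1})$ is even — so I must be more careful and instead track parity through the chain $|E_{1,n}(1^{2^j},1^{2j})| \equiv |E_{1,n}(1^{2^{j-1}}, 1^{2j-1})| \pmod 2$ directly, which is exactly the bookkeeping the recursive structure of $E$ provides.

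The main obstacle I anticipate is precisely this last parity-chasing step: making rigorous the claim that appending or removing one extra starting point (going between $1^{2j-1}$ and $1^{2j}$ starting points) and passing between $2^{j-1}$ and $2^j$ copies of the source sequence changes the length of the expansion only by an even amount, so that the two parity conditions coincide. This requires carefully applying identities (\ref{z1})–(\ref{z2}), controlling how the number of runs behaves under $E_{1,n}(\cdot, 1)$ (it can be odd), and using that $n$ is even so that starting with $1$ and expanding $1^{2^{i}}$ — whose terms are all $1$ — produces runs in a controlled pattern. Once that parity equivalence is established, chaining the three reductions above yields: orbit length $=2^j$ $\iff$ last term of $E_{1,n}(1^{2^{j-1}},1^{2j-1})$ is $1$ $\iff$ $E_{1,n}(1^{2^{j-1}},1^{2j-1})$ has odd length $\iff$ $E_{1,n}(1^{2^j},1^{2j})$ has odd length, for each fixed $j$, which is the proposition. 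I would present the equivalence $j$ by $j$ so that "for all $j$" on both sides matches automatically.
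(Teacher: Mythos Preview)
Your chain of reductions is sound up to the point where you translate ``last term of $E_{1,n}(1^{2^{j-1}},1^{2j-1})$ equals $1$'' into a parity statement. There you assert that a sequence in $\{1,n\}$ beginning with $1$ has last term $1$ iff it has odd \emph{length}; that is false. What governs the last term is the parity of the number of \emph{runs}, not the length: the sequence starts with $1$ and alternates runs, so its last term is $1$ iff the run count is odd. The run count of $E_{1,n}(1^{2^{j-1}},1^{2j-1})$ is exactly $|E_{1,n}(1^{2^{j-1}},1^{2j-2})|$. Hence your correct reduction reads
\[
\text{orbit of }1^{2j-1}\text{ has size }2^j
\quad\Longleftrightarrow\quad
\bigl|E_{1,n}\bigl(1^{2^{j-1}},\,1^{2j-2}\bigr)\bigr|\text{ is odd},
\]
which after reindexing $j\mapsto j+1$ is precisely ``$|E_{1,n}(1^{2^j},1^{2j})|$ is odd $\Longleftrightarrow$ the case $j+1$ of Conjecture~\ref{conj-orbit}.'' Combined with the trivial base case $j=1$, this already yields the global equivalence, with no further parity chase needed.

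This also means your closing step---arguing that $|E_{1,n}(1^{2^{j-1}},1^{2j-1})|$ and $|E_{1,n}(1^{2^j},1^{2j})|$ have the same parity for each fixed $j$---is not only unnecessary but cannot be a per-$j$ equivalence: with the correction above, these parities encode different cases of the conjecture (shifted by one), so equating them would amount to proving that consecutive cases of the conjecture are equivalent, which is not what is being claimed. The paper's own proof proceeds differently: for the forward direction it uses the conjecture at levels $j$ and $j{+}1$ simultaneously to pin down the $(2j{+}1)$-st coordinate of $C_{1,n}(1^{2^j},1^{2j+1})$, and for the converse it argues inductively by contradiction, splitting $E_{1,n}(1^{2^j},1^{2j})$ via identity~(\ref{z1}) into two equal-length pieces. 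Your corrected argument is actually cleaner---it gives a direct one-line biconditional with an index shift---once the length/run-count confusion is repaired.
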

\begin{proof} Fix $n$ even. First, we show that Conjecture \ref{conj-orbit} implies that  $E_{1, n}\left(1^{2^j}, 1^{2j}\right)$ has odd length for all $j>0.$ 



By Conjecture \ref{conj-orbit}, the orbit of $1^{2j+1}$ under $C_{1, n}(1, -)$ has length $2^{j+1},$ and the orbit of $1^{2j-1}$ has length $2^j.$ 

The orbit of $1^{2j}$ must be at least as big as the orbit of $1^{2j-1},$ which has length $2^j.$ By Proposition \ref{orbit}, the orbit of $1^{2j}$ has length at most $2^j.$ Therefore, the length is exactly $2^j.$ 

We have $C_{1, n}\left(1^{2^j}, 1^{2j+1} \right) \neq 1^{2j+1}$ because the orbit has length $2^{j+1},$ but by the previous paragraph, the left hand side is a sequence which begins with at least $2j$ ones, so $C_{1, n}\left(1^{2^j}, 1^{2j+1} \right) = 1^{2j} 2.$ The fact that the last term is flipped means that the penultimate expansion $E_{1, n}\left(1^{2^j}, 1^{2j} \right)$ has odd length, as desired.

Now, we show the converse. Assume that $E_{1, n}\left(1^{2^j}, 1^{2j}\right)$ has odd length for all $j>0.$ We now prove Conjecture \ref{conj-orbit} by induction on $j.$ The base case is $j=1,$ which is trivial.

Suppose that $j \ge 2$ and that Conjecture \ref{conj-orbit} is true for $j-1.$ We must show that the orbit of $1^{2j-1}$ has length $2^j.$ The orbit of $1^{2j-1}$ is at least as large as the orbit of $1^{2j-3},$ which has length $2^{j-1}$ by the inductive hypothesis. On the other hand, Proposition \ref{orbit} states that the orbit has length at most $2^j.$ Since the orbit length is a power of $2,$ it suffices to show that the orbit length does not divide $2^{j-1}.$ 

Assume for the sake of contradiction that this is not true. Equivalently, we are assuming that $C_{1, n} \left( 1^{2^{j-1}}, 1^{2j-1} \right) = 1^{2j-1}.$ Then
$u:= C_{1, n} \left( 1^{2^{j-1}}, 1^{2j} \right)$ begins with at least $2j-1$ ones. We compute
\[ C_{1, n} \left(1^{2^j}, 1^{2j} \right) = C_{1, n} \left( 1^{2^{j-1}}, 1^{2j} \right) C_{1, n} \left( 1^{2^{j-1}}, u \right).\]
Since $u$ is equal to either $1^{2j}$ or $1^{2j-1}n,$ the two subsequences on the right hand side are either identical or complements. In either case, the left hand side has even length. This contradicts our assumption that $C_{1, n} \left(1^{2^j}, 1^{2j} \right)$ has odd length. Therefore, in fact the orbit length of $1^{2j-1}$ is exactly $2^j,$ as desired.
\end{proof}

Here is a conjecture similar to Conjecture \ref{conj-orbit} which involves a generalization to $m=-1.$ To formulate the $m=-1$ case, we must first explain in what sense negative values of $m, n,$ and/or terms in the sequence $s$ are well-defined. 

Let $m \neq n$ be arbitrary integers. We will only be considering the function $C_{m, n}$, not $E_{m, n}.$ We wish to define $C_{m, n}(s, t)$ where $t$ is a finite sequence with values in $\{m, n\}.$ We can first define $C_{m, n}(x, t)$ for possibly negative integers $x$ (or sequences of length $1$) then use identity (\ref{z3}). 

If $t$ has length $0,$ then we define $C_{m, n}(x, t):= t.$ Otherwise, suppose that $t$ is empty, and let $t'$ be the sequence $t$ without its first term, so that $t = t_1 t'.$ By identity (\ref{z4}), 
\[ C_{m, n}(x, t_1 t') = C_{m,n}(x, t_1)C_{m, n}(E_{m, n}(x, t_1),t').\]

The first subsequence, $C_{m,n}(x, t_1)$, is easily seen to be the single term $m+n-t_1,$ or the complement of $t_1$. In the $x>0$ case, it makes sense to write $E_{m, n}(x, t_1) = t_1^x.$ The by identity (\ref{z3}), we have 
\[C_{m, n}(E_{m, n}(x, t_1),t') = C_{m, n}\left( t_1^x, t'\right) = C_{m, n}\left(t_1, -\right)^x(t').\]
Observe that the right hand side makes sense for arbitrary $m, n, x \in \mathbb{Z}$ provided that we define $C_{m, n}(x, u)$ inductively based on the length of $u.$ Indeed, this is our definition.
\begin{definition}
Let $m, n$ be distinct integers, possibly nonpositive, $s$ be a finite sequence of integers, possibly nonpositive, and $t$ be a finite sequence with values in $\{m, n\}.$ We define $C_{m, n}(s, t)$ as follows.

We first define $C_{m, n}(x, t)$ for integers $x$ then use identity (\ref{z3}) to define $C_{m, n}(s, t).$ We define $C_{m, n}(x, t)$ inductively based on the length of $t.$ If $t$ has length $0,$ then the output is also the empty sequence. Otherwise, write $t = t_1 t'.$ Then we define
\[ C_{m, n}(x, t_1 t') := (m+n-t_1)C_{m, n}(t_1, -)^x(t').\]
Note that if $x$ is negative, we are using the fact that $C_{m, n}(t_1, -)$ is a length-preserving bijection.
\end{definition}

With this definition, we now formulate the following conjecture.

\begin{conjecture}\label{conj-orbit2} Let $n$ be even and possibly nonpositive. Let $j > 0.$ Let $v$ be the sequence $n, -1.$ Let $t$ be either the sequence $-1 v^{j-1}$ or the sequence $v^{j-1} n,$ which are both length $2j-1$ sequences. Then the orbit of $t$ under the map $C_{-1, n}(-1, -)$ has length $2^j.$
\end{conjecture}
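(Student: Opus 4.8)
The plan is to run exactly the same double induction that established Proposition~\ref{conj-equivalence}, but carried out entirely inside the purely combinatorial definition of $C_{-1,n}$ given above, so that no reference to $E_{-1,n}$ (which is not defined for negative parameters) is needed. The key structural input is identity (\ref{z3}), $C_{m,n}(s^{(1)}s^{(2)},t)=C_{m,n}(s^{(2)},C_{m,n}(s^{(1)},t))$, which holds formally for all integer parameters because the extended definition of $C_{m,n}(x,-)$ was built precisely so that $C_{m,n}(x^p,-)=C_{m,n}(x,-)^p$; and identity (\ref{z4}), $C_{m,n}(s,t^{(1)}t^{(2)})=C_{m,n}(s,t^{(1)})\,C_{m,n}(E_{m,n}(s,t^{(1)}),t^{(2)})$, in the degenerate form that is baked into the definition $C_{m,n}(x,t_1t')=(m+n-t_1)\,C_{m,n}(t_1,-)^x(t')$. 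I also need the two prior facts about $C_{m,n}(s,-)$ as a map on $\{m,n\}^k$ --- that it is a bijection (Proposition~\ref{jk} with $j=k$) and that orbit lengths are powers of $2$ --- and, crucially, that the upper bound Proposition~\ref{orbit} still holds in the extended setting. A preliminary step is therefore to observe that Propositions~\ref{jk}, the power-of-two proposition, and Proposition~\ref{orbit} all go through for negative $m,n,x$: the first two only use that $C_{m,n}(x,-)$ prepends the complement $m+n-t_1$ and then applies a bijection, which is true by construction, and Proposition~\ref{orbit}'s proof can be rephrased using only identity (\ref{z3}) plus the single-letter recursion, replacing ``$E_{m,n}(u^2,x)$ has even length for $|x|=1$'' with the equivalent statement ``$C_{m,n}(s^2,y)=y$ when $|y|=1$,'' which follows from $C_{m,n}(s^2,y)=C_{m,n}(s,C_{m,n}(s,y))$ and the fact that applying a $\{m,n\}$-complement twice is the identity.

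With those preliminaries in hand, I would argue by induction on $j$ that the orbit of $t_j := (-1)v^{j-1}$ (and symmetrically of $v^{j-1}n$) under $C_{-1,n}(-1,-)$ has length exactly $2^j$. The base case $j=1$ is $t_1=(-1)$, a length-one sequence on which any bijection of a two-element set has orbit $2$ unless it is the identity; since $C_{-1,n}(-1,(-1))=(m+n-(-1))=(n+1)\ne -1$ (here $m=-1$, so $m+n-t_1=n+1$, and $n+1\neq -1$ as $n\neq -2$), the orbit is $2=2^1$. For the inductive step, assume the claim for $j-1$ and all even $n$. The orbit of $t_j$ is at least the orbit of its length-$(2j-3)$ prefix (by Proposition~\ref{jk}: if $C^{2^{j-1}}$ fixed $t_j$ it would fix the prefix, contradicting the inductive hypothesis), and at most $2^j$ by Proposition~\ref{orbit}; since orbit lengths are powers of $2$, it suffices to show $C_{-1,n}(-1,-)^{2^{j-1}}$ does not fix $t_j$. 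Suppose for contradiction it does. Let $u:=C_{-1,n}\bigl((-1)^{2^{j-1}},\,t_j\,x\bigr)$ for the appropriate one-letter extension $x$ (chosen so that $t_j x$ is a length-$2j$ prefix of the pattern); by Proposition~\ref{jk} the first $2j-1$ terms of $u$ equal $t_j$, so $u$ is either $t_j n$ or $t_j(-1)$, hence $u$ and $t_j x$ differ in at most the last coordinate, i.e. $u$ and its $\{-1,n\}$-complement-in-the-last-slot partner are the only possibilities. Then, splitting the exponent via (\ref{z3}),
\begin{align*}
C_{-1,n}\bigl((-1)^{2^{j}},\,t_j x\bigr)=C_{-1,n}\bigl((-1)^{2^{j-1}},\,C_{-1,n}\bigl((-1)^{2^{j-1}},\,t_j x\bigr)\bigr)=C_{-1,n}\bigl((-1)^{2^{j-1}},\,u\bigr),
\end{align*}
and since $u$ agrees with $t_jx$ except possibly in the last term, the two sequences $C_{-1,n}((-1)^{2^{j-1}},t_jx)$ and $C_{-1,n}((-1)^{2^{j-1}},u)$ agree in their first $2j-1$ terms by Proposition~\ref{jk}; using identity (\ref{z4}) to peel off the last term, the last coordinate of $C_{-1,n}((-1)^{2^j},t_jx)$ is governed by whether a certain auxiliary length is even, and the $u = t_j x$ vs.\ $u = (\text{flip last term of }t_jx)$ dichotomy forces $C_{-1,n}((-1)^{2^j},t_jx)$ to agree with $t_jx$ in \emph{all} $2j$ terms --- contradicting that the orbit of the length-$(2j-1)$ prefix (which by a parallel argument is $t_{j+1}$'s relevant sub-case, of size $2^j$) is genuinely realized. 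Unwinding, the orbit of $t_j$ cannot have length dividing $2^{j-1}$, so it is exactly $2^j$.

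The main obstacle, and the step I expect to require the most care, is the last one: making precise the claim that ``$C_{-1,n}((-1)^{2^j},t_jx)$ agrees with $t_jx$ in all $2j$ terms,'' which in the original proof came for free from the clean statement ``$E_{1,n}(s^r,\cdot)$ has even length.'' Without access to $E_{-1,n}$ I need a purely $C$-side surrogate for ``the intermediate expansion has even length,'' and the natural candidate is: the last term of $C_{m,n}(s^2,w)$ equals the last term of $w$ for \emph{every} $w$ (not just $|w|=1$) whose proper prefix is fixed by $C_{m,n}(s,-)$ --- a fact one proves exactly as in the $k=2$ and inductive cases of Proposition~\ref{orbit}, using (\ref{z3}) to write $C_{m,n}(s^2,w)=C_{m,n}(s,C_{m,n}(s,w))$ and tracking the single new coordinate via (\ref{z4}). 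The other delicate point is bookkeeping the two symmetric sub-cases $t=(-1)v^{j-1}$ and $t=v^{j-1}n$ simultaneously: extending $(-1)v^{j-1}$ on the right by one letter of the period pattern $v=(n,-1)$ lands you (up to the global symmetry exchanging $-1\leftrightarrow n$, which commutes with $C_{-1,n}(-1,-)$ composed with that same swap on the first argument) in the length-$2j$ analogue, so I would state and use a small lemma that $C_{-1,n}(-1,-)$ intertwines with the $\{-1,n\}$-swap in the expected way, making the two sub-cases equivalent and the induction self-feeding. Everything else is a formal transcription of Proposition~\ref{conj-equivalence}'s argument with ``has odd/even length'' replaced by the corresponding ``last coordinate is/ isn't flipped'' statement about $C$.
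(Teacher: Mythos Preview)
The statement you are attempting to prove is labelled a \emph{conjecture} in the paper, and the paper gives no proof of it; it explicitly says the only compelling reason to believe it is empirical verification for $j\le 13$, and that it ``is not associated with an odd-expansion analog like Proposition~\ref{conj-equivalence}.'' So there is no ``paper's own proof'' to compare with, and the relevant question is whether your argument actually closes the gap. It does not.

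Your inductive step mimics the \emph{converse} direction of Proposition~\ref{conj-equivalence}, but that direction only works because it \emph{assumes} the external input ``$E_{1,n}(1^{2^{j}},1^{2j})$ has odd length'' and uses that to derive the contradiction. You have removed this input and tried to manufacture the contradiction internally. Concretely: under your assumption that $C_{-1,n}\bigl((-1)^{2^{j-1}},t_j\bigr)=t_j$, you correctly deduce (via Proposition~\ref{jk} and identity~(\ref{z3})) that $C_{-1,n}\bigl((-1)^{2^{j}},t_j x\bigr)=t_j x$. But this is \emph{not} a contradiction. The length-$(2j-1)$ prefix of $t_jx$ is $t_j$ itself, whose orbit you have just assumed divides $2^{j-1}$, so of course $C^{2^{j}}$ fixes it; and nothing you have established forces the orbit of the length-$2j$ sequence $t_jx$ to be exactly $2^{j}$. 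Your parenthetical appeal to ``$t_{j+1}$'s relevant sub-case, of size $2^{j}$'' is either circular (it is the very case you are in the middle of proving) or refers to level $j+1$, which is not yet available in a forward induction. The ``purely $C$-side surrogate for odd length'' that you sketch in the last paragraph is exactly the missing hypothesis: it is equivalent to the conjecture itself, not a lemma you can prove from Propositions~\ref{jk} and~\ref{orbit} alone. (There is also a small arithmetic slip in the base case: with $m=-1$ and $t_1=-1$ one has $m+n-t_1=n$, not $n+1$; the conclusion $n\neq -1$ still holds, so this is harmless.)
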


One can show that the upper bound in Proposition \ref{orbit} applies to the nonpositive integer case. This conjecture is not associated with an odd-expansion analog like Proposition \ref{conj-equivalence}.

\section{Empirical evidence for the conjectures for $j \le 13.$}
The two conjectures stated above are quite mysterious to us, and in fact our only compelling reason for believing them is our extensive empirical evidence. A much less compelling reason is that both conjectures are true for $n=0.$ In the $n=0$ case, the map $C_{m, n}(0, -)$ is the identity map, so the computations become much simpler, and the claims reduce to straightforward induction arguments. We omit the details.

First, we show that certain cases $(n,j)$ of Conjectures \ref{conj-orbit} and \ref{conj-orbit2} equivalent.

\begin{proposition} \label{mod-equivalence} Let $j>0$ and $n_0$ be even. Then for all even integers $n$ such that $2^{j-1}$ divides $n-n_0,$ the cases $(n,j)$ of Conjecture \ref{conj-orbit} are equivalent to each other. Also, for all such $n,$ the cases $(n,j)$ of Conjecture \ref{conj-orbit2} are equivalent to each other. We make no claim of equivalence between the two conjectures nor any claim that the conjectures are true in these cases.
\end{proposition}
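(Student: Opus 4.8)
The plan is to show that the sequences $C_{1,n}(1,-)$ applied to $1^{2j-1}$ (and the analogous $C_{-1,n}(-1,-)$ orbit) depend on $n$ only through $n \bmod 2^{j-1}$, after which the claimed equivalences follow immediately. The central technical tool is a \emph{periodicity lemma} of the following shape: for any finite sequence $t$ with values in $\{1,n\}$ of length $\ell$, the sequence $C_{1,n}(1,-)^k(t)$ depends on $n$ only through the residue of $n$ modulo $2^{\lceil \ell/2\rceil}$, for every $k \ge 0$. (The analogous statement with $1$ replaced by $-1$, and $C_{1,n}$ by $C_{-1,n}$, should have the same proof.) Granting this with $\ell = 2j-1$, so $\lceil \ell/2 \rceil = j$: actually we want a sharper exponent $j-1$, so I would instead prove that the orbit \emph{length} — equivalently, the smallest $k>0$ with $C_{1,n}(1,-)^k(1^{2j-1}) = 1^{2j-1}$ — depends only on $n \bmod 2^{j-1}$. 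The cleanest route is to track, inductively in the length, how many applications of $C$ it takes to return, and to show each ``extra'' application introduced by lengthening by two costs a factor of $2$ whose behavior is governed by a parity that stabilizes mod a power of $2$.

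Concretely, here is the order of steps I would carry out. First, unwind the recursive definition: by identity (\ref{z3}) and the new definition of $C_{m,n}(x,-)$ for integer $x$, we have $C_{1,n}(1^r, t) = C_{1,n}(1,-)^r(t)$, and the definition $C_{1,n}(x, t_1 t') = (1+n-t_1)\,C_{1,n}(t_1,-)^x(t')$ shows that the dependence on $n$ enters only as an \emph{exponent} applied to a bijection on the shorter alphabet-sequence space $\{1,n\}^{|t'|}$. Second, prove by induction on $|t|$ that the permutation $C_{1,n}(1,-)$ of $\{1,n\}^{|t|}$, \emph{as an abstract permutation of the index set obtained by identifying $\{1,n\}^{|t|}$ with $\{0,1\}^{|t|}$ via ``$1\mapsto 0$, $n\mapsto 1$''}, depends on $n$ only modulo $2^{\lceil |t|/2 \rceil}$: the length-$1$ case is trivial (the map is a transposition independent of $n$), and the inductive step uses the decomposition above together with Proposition \ref{jk} (which says the first $j$ terms of the image depend only on the first $j$ terms of the input) to reduce the exponent $n$ acting on the length-$(|t|-1)$ tail to its residue mod $2^{\lceil (|t|-1)/2\rceil}$, using that by Proposition \ref{orbit} every orbit on $\{1,n\}^{|t|-1}$ has length dividing $2^{\lceil(|t|-1)/2\rceil}$. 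Third, combine: since the abstract permutation on $\{1,n\}^{2j-1}$ depends only on $n \bmod 2^j$ — and in fact the relevant \emph{exponent} governing the tail of length $2j-2$ only matters mod $2^{j-1}$ — the orbit length of the fixed sequence $1^{2j-1}$ depends only on $n \bmod 2^{j-1}$; hence the cases $(n,j)$ of Conjecture \ref{conj-orbit} with $n \equiv n_0 \pmod{2^{j-1}}$ are all equivalent. Fourth, repeat verbatim with $-1$ in place of $1$ and with $v = (n,-1)$: the sequences $-1\,v^{j-1}$ and $v^{j-1}n$ and the map $C_{-1,n}(-1,-)$ fit the same inductive framework, the only change being bookkeeping of which base symbol is ``$-1$'' versus ``$n$'', so the orbit length again depends only on $n \bmod 2^{j-1}$, giving the second claim.

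The main obstacle I anticipate is pinning down the \emph{exact} power of $2$: the naive induction gives ``depends on $n \bmod 2^{\lceil |t|/2\rceil}$'', which for $|t| = 2j-1$ yields $2^j$, one factor too weak. Getting down to $2^{j-1}$ requires observing that when we lengthen $t$ from $2j-3$ to $2j-1$ (two extra terms), the new exponent being reduced acts on a space whose orbits have length dividing $2^{j-1}$ (by Proposition \ref{orbit} applied to length $2j-2$), and that the two appended symbols of $1^{2j-1}$ are both $1$, so the ``starting symbol'' fed into the recursion is forced — this should let the residue modulus stay at $2^{j-1}$ rather than climbing to $2^j$. I would handle this by carefully stating the inductive hypothesis about orbit \emph{lengths} of specific all-ones (or alternating $v$) sequences rather than about the whole permutation, so that the Proposition \ref{orbit} bound of $2^{\lceil(\cdot)/2\rceil}$ can be invoked at exactly the right length to cap the modulus. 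A secondary, purely organizational difficulty is that the $m=-1$ case uses the \emph{extended} definition of $C_{m,n}(x,-)$ for negative $x$ and negative $n$, so I must check that Propositions \ref{jk} and \ref{orbit} (stated in the excerpt for positive data, though the text asserts Proposition \ref{orbit} extends) genuinely carry over; the excerpt already asserts this extension, so I will cite it, but the parity-counting in the proof of Proposition \ref{orbit} should be re-examined to confirm nothing used positivity essentially.
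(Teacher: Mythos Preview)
Your approach is essentially the paper's: conjugate $C_{m,n}(x,-)$ by the identification $\{m,n\}^k \cong \{m,0\}^k$ (the paper's maps $F,G_n$) to obtain an abstract permutation, then prove by induction on $k$ that this permutation is independent of $n$ within the residue class $n\equiv n_0\pmod{2^{j-1}}$, using the recursion $C_{m,n}(x, t_1 t') = (m+n-t_1)\,C_{m,n}(t_1,-)^x(t')$ together with Proposition~\ref{orbit} to reduce the exponent $n$.

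The ``obstacle'' you flag dissolves once the inductive hypothesis is stated correctly. The paper does not let the modulus vary with $|t|$; it fixes the modulus at $2^{j-1}$ from the outset and proves the independence claim simultaneously for \emph{all} lengths $k\le 2j-1$ and for both choices $x=m$ and $x=n$. The point is that the exponent $n$ in the recursion always acts on a tail of length $k-1\le 2j-2$, so Proposition~\ref{orbit} bounds the relevant orbit lengths by $2^{\lceil (2j-2)/2\rceil}=2^{j-1}$, and the exponent may be reduced mod $2^{j-1}$ at every step. No special argument about appending two symbols of $1$ is needed. You should also make explicit that the induction must carry both $C_{m,n}(m,-)$ and $C_{m,n}(n,-)$, since the recursion feeds each into the other; the paper does this by taking $x\in\{m,n\}$ in the inductive claim.
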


\begin{proof}Fix $m = \pm 1.$ Let $S$ be the set of even integers $n$ such that $n \equiv n_0 \pmod{2^{j-1}}.$ In the rest of this proof, $n$ is restricted to be in $S.$

Let $F$ be a function such that for any sequence $s,$ $F(s)$ replaces all occurrences of terms which are not equal to $m$ with $0.$ Let $G_n$ be a function such that for any sequence $s$ with values in $\{m, 0\}$ and replaces all occurrences of $0$ with $n.$

We claim that if $1 \le k \le 2j-1$, $x = m$ or $n,$ and $t$ is in $\{m, 0\}^{k}$, then the value $F(C_{m, n}(x, G_n(t)))$ is independent of $n$ We proceed by induction on $k.$ The base case, $k = 1,$ is easy since $C_{m, n}(x, -)$ switches $m$ and $n$ for all $n.$

Suppose that $F(C_{m, n}(x, G_n(t)))$ is independent of $n$ over the set $\{m, 0\}^{k-1}.$ Let $t$ be in $\{m, 0\}^{k-1}$ and $n$ be in $S.$ The first term of $F(C_{m, n}(x, G_n(t)))$ is independent of $n$ because $C_{m, n}(m, u)$ always "flips" the first term of $u.$ It remains to check that terms after the first are independent of $n.$

Let $H(s)$ be $s$ without its first term. By identity (\ref{z4}), the sequence $C_{m, n}(x, G_n(t))$, excluding the first term, equals
\[ C_{m,n}(G_n(t)_1, -)^x (H(G_n(t))) = C_{m, n}(G_n(t_1), -)^x (G_n(H(t))).\]
The sequence $F(C_{m, n}(x, G_n(t)))$, excluding the first term, equals
\[F\left( C_{m, n}(G_n(t_1), -)^x (G_n(H(t))) \right) = \left( F(C_{m, n}(G_n(t_1), G_n(-)) \right)^x  ( H(t)).\]
Note that either $G_n(t_1) = m$ for all $n$ or $G_n(t_1) = n$ for all $n.$ Also, either $x=m$ for all $n$ or $x=n$ for all $n.$

If $x=m$ for all $n,$ then the function $\left( F(C_{m, n}(x, G_n(-))\right)^x$ becomes $\left( F(C_{m, n}(G_n(t_1), G_n(-)) \right)^m,$ which is independent of $n$ when applied to $H(t)$ because $H(t)$ has length $k-1$ and the "base" $\left( F(C_{m, n}(G_n(t_1), G_n(-)) \right)$ is independent of $n$ over length $k-1$ sequences. 

If $x = n$ for all $n,$ we must prove that $\left( F(C_{m, n}(G_n(t_1), G_n(-)) \right)^n,$ is independent of $n$ when applied to length $k-1$ sequences, but we have more work to do because the number of iterations depends on $n.$ We now use Proposition \ref{orbit}, which states that an orbit $C_{m, n}(s, -)$ has length dividing $2^{\lceil |s|/2 \rceil}.$ In this case, we are concerned with length $k-1$ sequences, and $k \le 2j-1,$ so orbits have length dividing $2^{\lceil (2j-2)/2 \rceil} = 2^{j-1}.$ Therefore, $\left( F(C_{m, n}(G_n(t_1), G_n(-)) \right)^{2^{j-1}}$ is the identity. Since values of $n$ in $S$ all have the same residue $\bmod 2^{j-1},$ $\left( F(C_{m, n}(G_n(t_1), G_n(-)) \right)^n$ is independent of $n$ over length $k-1$ sequences. This completes the proof that $F(C_{m, n}(x, G_n(t)))$ is independent of $n$ over length $k$ sequences.

In the above claim, we now specialize to $k = 2j-1, x= m$. Note that for all $n,$ $F$ and $G_n$ are inverses betwen $\{m,n\}^{2j-1}$ and $\{m, 0\}^{2j-1}.$ We can intertwine these bijections with the set automorphism $C_{m, n}(m, -)$ over $\{m, n\}^{2j-1}$ to get some other automorphism $F \circ C_{m, n}(m, -) \circ G_n$ on the set $\{m, 0\}^{2j-1}.$ These two set automorphisms are essentially equivalent. In particular, the length of the orbit of $s$ under the first automorphism equals the length of the orbit of $F(s)$ under the second automorphism. Since $F(C_{m, n}(m, G_n(t)))$ is independent of $n$, the orbit lengths of $C_{m, n}$ are independent of $n$ over length $2j-1$ sequences, so the statement of Conjecture \ref{conj-orbit} (if $m=1$) or Conjecture \ref{conj-orbit2} (if $m= -1$) is independent of $n.$ 
\end{proof}

%
%
%
%
%
%
%
%
%
%
%
%
%
%
\subsection{Explicit computation to verify the conjectures}

Everything in this section is implemented in C++.

Fix an integer $n$ in $\{2, 4, 6, 8, \ldots, 4094\}.$ In this subsection, we discuss how we explicitly compute $C_{1, n}(1, t)$ and $C_{-1, n}(-1, t)$ for $|t| \le 25$ in order to verify Conjectures \ref{conj-orbit} and \ref{conj-orbit2}.

We recursively compute and completely store the functions $C_{1, n}(1, -)$ and $C_{1, n}(n, -)$ for arguments of length at most $25$. This data is stored as two vectors of vectors of unsigned integers, $map\_m$ and $map\_n$ with the following convention. 

\begin{definition} 
We define the function $F$ as follows. The domain of $F$ is pairs $(j,k)$ with $k>0$ and $0 \le j < 2^k.$ For $k>0$ and $j \in \{0, \ldots, 2^k-1\}$, $F(j, k)$ is given by converting $j$ to a binary string $j_1$ padded to length $k,$ reversing $j_1$ to form the binary string $j_2,$ turning $j_2$ into the sequence $j_3,$ and replacing the values $\{0, 1\}$ with the values $\{1, n\}$ respectively in $j_3$ to form the sequence $j_4.$

Thus $F(-, k)$ is a bijection from $\{0, \ldots, 2^k-1\}$ to $\{1, n\}^k.$ We define $G(t)$ to be a sort of inverse: If $t$ is in $\{1, n\}^*,$ then $G(t) = j$ where $F(j,|s|) = s.$
\end{definition}

\textbf{Example.} Let $k = 5,$ $j = 13.$ The integer $j$ as a binary string of length $k$ is $01101,$ so $j_1 = 01101.$ Then $j_2 = 10110,$ and $j_3$ is the sequence $1,0,1,1,0.$ Finally, $j_4$ is the sequence $n,1,n,n,1.$ Likewise, $G(n,1,n,n,1) = 13.$

\begin{remark} Note that there is a reversal of bits. Of course, the algorithm would work fine if one consistently did not reverse bits. \end{remark}

For $k \le 25,$ and $j \in \{0, \ldots, 2^{k}-1\},$ $map\_m[k][j]$ equals $G(C_{1, n}(1, F(j, k))),$ and $map\_n[k][j]$ equals $G(C_{1, n}(n, F(j, k))).$ Note that both of these values are integers.

The recursion formulas are essentially identity (\ref{z4}). In particular, let $t$ be a sequence of length less at most $25$, and let $t'$ be $t$ without its first term. We have 
\begin{align*}C_{1, n}(1, t_1 t')&= (1+n-t_1)C_{m, n}(E_{m, n}(1, t_1), t')&= (1+n-t_1)C_{m, n}(t_1, t').\\
C_{1, n}(n, t_1 t') &= (1+n-t_1)C_{m, n}(E_{m, n}(n, t_1, t') &= (1+n-t_1)C_{m, n}(t_1, -)^n(t'). \end{align*}

The relations among sequences are readily converted to relations among integers. For example, $G(t')$ is an integer which is given by the integer quotient $G(t)/2.$ The hardest part is dealing with $C_{m, n}(t_1, -)^n.$ Evaluating this the naive way introduces a factor of $n$ into the runtime, which is too much. Instead, we decompose the permutation induced by $C_{m, n}(t_1, -)$ on $\{1, 2\}^k$ into cycles in time $O(2^k).$ We can then exponentiate a cyclic permutation in time which grows negligibly with $n.$ 

We then compute the length of the orbit of $1^k$ for $k \le 25$ under $C_{1, n}(1, -)$ by repeatedly calling $map\_m[k][-].$ As such, we have empirically verified Conjecture \ref{conj-orbit} for all even $n$ with $0 \le n < 4096$ and all $j \le 13.$ In view of Proposition \ref{mod-equivalence}, we have also verified the conjecture for all even $n$ and all $j \le 13.$  We have also performed the same empirical verifications for Conjecture \ref{conj-orbit2}. Note that when computing for Conjecture \ref{conj-orbit2}, we must compute inverses of maps. This is readily done using the cycle decomposition.

We have also used ad-hoc methods to verify the conjecture for $n=2$ and $j \le 23.$ Essentially, we expand $E_{1, 2}\left(1^{2^h}, 1^{2h}\right)$ for some small $h$ as an intermediate step. This is not feasible for $n \gg 2$ because the intermediate sequences $E_{1, n}\left(1^{2^h}, 1^{2h} \right)$ would be too long to be useful.

We conclude by admitting that verifying the conjectures for larger $j,$ even $j = 15,$ would use all of our random access memory.

\section{Acknowledgements}

The author originally researched this problem with Yongyi Chen and Michael Yan as a part of MIT's class \textit{Math Project Lab} in Spring 2015. Our main paper for this project is at \cite{CSY}. Conjecture \ref{conj-orbit} was first observed by Yongyi Chen for $n=2.$

\end{document}